\numberwithin{equation}{section}
\numberwithin{figure}{section}
\theoremstyle{plain}
\newtheorem{thm}{\protect\theoremname}[section]
\theoremstyle{remark}
\newtheorem{rem}[thm]{\protect\remarkname}
\theoremstyle{plain}
\newtheorem{cor}[thm]{\protect\corollaryname}
\theoremstyle{definition}
\newtheorem{defn}[thm]{\protect\definitionname}
\providecommand{\corollaryname}{Corollary}
\providecommand{\definitionname}{Definition}
\providecommand{\remarkname}{Remark}
\providecommand{\theoremname}{Theorem}
\begin{document}
\subjclass[2020]{Primary: 46L53; secondary: 46E22, 47B32, 47A20, 60G15, 62J07, 68T05.}
\title[]{Gaussian processes in Non-commutative probability}
\author{Palle E.T. Jorgensen}
\address{(Palle E.T. Jorgensen) Department of Mathematics, The University of
Iowa, Iowa City, IA 52242-1419, U.S.A.}
\email{palle-jorgensen@uiowa.edu}
\author{James Tian}
\address{(James F. Tian) Mathematical Reviews, 416 4th Street Ann Arbor, MI
48103-4816, U.S.A.}
\email{jft@ams.org}
\begin{abstract}
Motivated by questions in quantum theory, we study Hilbert space valued
Gaussian processes, and operator-valued kernels, i.e., kernels taking
values in $B(H)$ ($=$ all bounded linear operators in a fixed Hilbert
space $H$). We begin with a systematic study of p.d. $B(H)$-valued
kernels and the associated of $H$-valued Gaussian processes, together
with their correlation and transfer operators. In our consideration
of $B(H)$-valued kernels, we drop the p.d. assumption. We show that
input-output models can be computed for systems of signed kernels
taking the precise form of realizability via associated transfer block
matrices (of operators analogous to the realization transforms in
systems theory), i.e., represented via $2\times2$ operator valued
block matrices. In the context of $B(H)$-valued kernels we present
new results on regression with $H$-valued Gaussian processes.
\end{abstract}

\keywords{Positive definite kernels, reproducing kernel Hilbert spaces, Gaussian
processes, covariance, dilation, transfer functions, kernel ridge
regression. }
\maketitle

\section{Introduction}

The traditional framework for Gaussian processes and their covariance
kernels is the familiar case when the Gaussian processes under discussion
are scalar valued. And of course then the corresponding covariance
kernel will be positive definite (p.d.). Conversely, it is known that
every p.d. kernel arises as a covariance kernel from a (scalar valued)
Gaussian process. But for many questions in non-commutative analysis,
and in quantum theory, one must pass to the operator-valued case.
We make this precise for the case of operator valued kernels, i.e.,
kernels taking values in $B(H)$ ($=$ all bounded linear operators
in a fixed Hilbert space $H$). Our paper explores results and details
for this framework. In outline, the presentation below begins with
a systematic study of p.d. $B(H)$-valued kernels. And in \prettyref{sec:4},
we then present the corresponding results for the case of $H$-valued
Gaussian processes. In both sections, we make precise the links between
the two cases, i.e., contrasting scalar valued vs operator-valued.
Two main issues are resolved for the non-commutative (operator-valued)
setting: (i) for pairs of kernels, in Theorems \ref{thm:3-1} and
\prettyref{thm:3-4} we make precise the corresponding transfer operators,
and their realization; and (ii) in \prettyref{sec:4}, we make it
precise by relate it directly to the study of correlation for the
associated pairs of $H$-valued Gaussian processes. 

Our analysis of $B(H)$-valued kernels, and the corresponding Gaussian
processes from quantum probability (QP), allow us to expand the traditional
framework for Gaussian process-regression (GPR). We recall that GPR
is a powerful tool used in such areas as non-parametric regression
tasks in machine learning, and in statistics. Applications of the
results in our paper allow us to expand the classical setting of GPR
via use of our QP-Gaussian processes on observed data in making predictions
about unknown quantum states.

In earlier papers \cite{JT24-2,JT24-1,JT24-3} the authors introduced
a noncommutative framework for the study of quantum Gaussian processes.
The starting point in these results is the setting of operator values
positive definite kernels (p.d), i.e., $B(H)$-valued kernels taking
values in $B(H)$ for some Hilbert space $H$. Here in our present
discussion, we also consider $B(H)$ kernels, but we drop the p.d.
assumptions. Hence, we study pairs $K,L$ of $B(H)$-valued kernels,
with each kernel now arising as a difference of two p.d. kernels.

We demonstrate here that the study of such systems of signed kernels
is closely related to realizability for associated transfer matrices/operators
(from systems theory), taking the form of representations via $2\times2$
operator valued block matrices. For related papers on systems theory,
see e.g, \cite{MR133686,MR1465432,MR1873434,MR1957866,MR2397845,MR2543972,MR2992027,MR3647183,MR4557801,MR902603}.

\subsection*{Contents and main results}

In \prettyref{sec:2}, we present the basic tools for $B(H)$-valued
kernels which will be needed. \prettyref{sec:3} deals with systems
of $B(H)$-valued kernels, their properties, and their applications;
introducing: (i) transfer operators for pairs of kernels, \prettyref{thm:3-1};
(ii) corresponding input-output system realizations; (iii) ordering
of $B(H)$-valued kernels; and (iv) non-commutative Radon-Nikodym
derivatives, \prettyref{thm:3-4}. In \prettyref{sec:4}, we show
that the results (from Sections \ref{sec:2} and \ref{sec:3}) for
$B(H)$-valued kernels have counterparts for the corresponding $H$-valued
Gaussian processes. In particular, we show that pairs of $H$-valued
Gaussian processes admit the notions of conditioning, and joint distributions
(\prettyref{thm:d6}), i.e., a precise formulation of “jointly Gaussian”
for the $H$-valued case, and their properties (\prettyref{cor:4-7}).

\section{$B\left(H\right)$-valued positive definite kernels}\label{sec:2}

Our present paper deals with two closely related issues, (i) a number
of questions from operator theory, factorizations, and dilations;
and from (ii) non-commutative stochastic analysis. The first one addresses
questions around classes of Hilbert space dilations; and the second
(ii) deals with a general framework for Hilbert space valued Gaussian
processes. While, on the face of it, it is not obvious that the two
questions are closely related, but as we show, (i) and (ii) together
suggest the need for an extension of both the traditional framework
of positive definite (scalar valued) kernels and their associated
reproducing kernel Hilbert spaces (RKHSs); and of the framework for
non-commutative stochastic analysis. Hence our present starting point
will be a fixed separable Hilbert space $H$, and we then study the
notion of positive definite kernels $K$ which take values in the
algebra of all bounded operators on $H$, here denoted $B(H)$. For
background references on non-commutative probability, see e.g., \cite{MR4541766,MR4067990,MR4751140,MR4274591,MR4762721,MR4712050}
and the papers cited there.

A variety of Hilbert completions in our approach will now take an
explicit form, as opposed the guise of abstract equivalence classes.
In particular, every Hilbert space arising via a GNS (Gelfand-Naimark-Segal)
construction will now instead be a concrete RKHS. Hence, explicit
formulas, and no more abstract and inaccessible equivalence classes.

For the theory of reproducing kernels, the literature is extensive,
both pure and recent applications, we refer to the following, including
current, \cite{MR80878,MR31663,MR4302453,MR3526117,MR1305949,MR4690276,MR3700848}. 

\subsection*{Notations}

Throughout the paper, we continue with the physics convention: inner
products are linear in the second variable. 

For a p.d. kernel $K:S\times S\rightarrow\mathbb{C}$, we denote by
$H\left(K\right)$ the corresponding reproducing kernel Hilbert space.
It is the Hilbert completion of 
\[
span\left\{ K_{y}\left(\cdot\right):=K\left(\cdot,y\right)\mid y\in S\right\} 
\]
with respect to the inner product 
\[
\left\langle \sum_{i}c_{i}K\left(\cdot,x_{i}\right),\sum_{i}d_{j}K\left(\cdot,x_{j}\right)\right\rangle _{H\left(K\right)}:=\sum_{i}\sum_{j}\overline{c}_{i}d_{j}K\left(x_{i},x_{j}\right).
\]
The reproducing property is as follows: 
\[
\varphi\left(x\right)=\left\langle K\left(\cdot,x\right),\varphi\right\rangle _{H\left(K\right)},\forall x\in S,\:\forall\varphi\in H\left(K\right).
\]
For any orthonormal basis (ONB) $\left(\varphi_{i}\right)$ in $H\left(K\right)$,
one has 
\[
K\left(x,y\right)=\sum_{i}\varphi_{i}\left(x\right)\overline{\varphi_{i}\left(y\right)},\quad\forall x,y\in S.
\]

A $B\left(H\right)$-valued kernel $K:S\times S\rightarrow B\left(H\right)$
is positive definite (p.d.) if 
\[
\sum_{i=1}^{n}\left\langle a_{i},K\left(s_{i},s_{j}\right)a_{j}\right\rangle _{H}\geq0
\]
for all $\left(a_{i}\right)_{1}^{n}$ in $H$, $\left(s_{i}\right)_{1}^{n}$
in $S$, and all $n\in\mathbb{N}$.

\subsection*{Operator-valued kernels}

Below we introduce the general framework for operator valued positive
definite kernels. Our analysis is applied to several dilation constructions,
both in operator theory, and in stochastic processes. The latter will
be taken up in \prettyref{sec:4} again where we apply the results
here to build Hilbert space valued Gaussian processes, and examine
their properties. 

Let $S$ be a set, and let $K:S\times S\rightarrow B\left(H\right)$
be positive definite. Set $X=S\times H$, and define $\tilde{K}:X\times X\rightarrow\mathbb{C}$
by 
\[
\tilde{K}\left(\left(s,a\right),\left(t,b\right)\right)=\left\langle a,K\left(s,t\right)b\right\rangle _{H},
\]
for all $a,b\in H$, and all $s,t\in S$. Then $\tilde{K}$ is a scalar-valued
p.d. kernel on $X\times X$. Let $H(\tilde{K})$ be the corresponding
RKHS. 

It will follow from our subsequent results that this $H(\tilde{K})$
realization has properties that makes it universal (\prettyref{cor:B2}).
Moreover, one notes that in different but related p.d. settings, analogous
$H(\tilde{K})$-constructions serve as generators of dilation Hilbert
spaces, and representations. It has counterparts for such related
constructions as GNS, and Stinespring (complete positivity), see e.g.,
\cite{MR4114386,MR4581177,MR4482713,MR69403}.

The authors have chosen to include \prettyref{thm:B1}. Three reasons:
(i) this will allow us an opportunity to define and introduce the
basic notions, and terminology, which will be used throughout the
paper; (ii) for the benefit of non-specialists, it will make our paper
more self-contained. And, (iii) it will allow us to link the (more
familiar) classical theory for kernels with the new directions which
form the framework for the results presented later, Gaussian processes,
and the quantum framework.
\begin{thm}[\cite{JT24-1}]
\label{thm:B1}Consider the family of operators $\left\{ V_{K}\left(s\right)\right\} _{s\in S}$,
where $V_{K}\left(s\right):H\rightarrow H(\tilde{K})$, defined as
\[
V_{K}\left(s\right)a=\tilde{K}\left(\cdot,\left(s,a\right)\right):X\rightarrow\mathbb{C},\quad a\in H.
\]
Then the adjoint $V_{K}\left(s\right)^{*}:H(\tilde{K})\rightarrow H$
is determined by
\[
V_{K}\left(s\right)^{*}\tilde{K}\left(\cdot,\left(t,b\right)\right)=K\left(s,t\right)b
\]
for all $s,t\in S$, and $b\in H$. In particular, 
\[
V_{K}\left(s\right)V_{K}\left(s\right)^{*}\tilde{K}\left(\cdot,\left(t,b\right)\right)=\tilde{K}\left(\cdot,\left(s,K\left(s,t\right)b\right)\right)
\]
and 
\[
V_{K}\left(s\right)^{*}V_{K}\left(t\right)=K\left(s,t\right).
\]
\end{thm}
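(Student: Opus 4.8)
The plan is to reduce all three displayed identities to a single inner-product computation on kernel sections, namely the reproducing property of the scalar-valued kernel $\tilde{K}$ on $X=S\times H$. First I would check that each $V_K(s)$ is well defined, linear, and bounded. Linearity in $a$ is immediate from the physics convention: since $\tilde{K}((s',a'),(s,a))=\langle a',K(s',s)a\rangle_H$ is linear in its last argument $a$, the section $a\mapsto\tilde{K}(\cdot,(s,a))$ depends linearly on $a$. For boundedness, evaluating the $H(\tilde{K})$-norm of a kernel section gives
\[
\|V_K(s)a\|_{H(\tilde{K})}^2=\langle \tilde{K}(\cdot,(s,a)),\tilde{K}(\cdot,(s,a))\rangle_{H(\tilde{K})}=\tilde{K}((s,a),(s,a))=\langle a,K(s,s)a\rangle_H\leq\|K(s,s)\|\,\|a\|_H^2,
\]
so $V_K(s)\in B(H,H(\tilde{K}))$ with $\|V_K(s)\|\leq\|K(s,s)\|^{1/2}$, and hence $V_K(s)^*$ exists as a bona fide bounded operator.

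Next I would pin down the adjoint on the total set of kernel sections $\{\tilde{K}(\cdot,(t,b))\}$, which span a dense subspace of $H(\tilde{K})$. Fix $s,t\in S$ and $a,b\in H$, and pair:
\[
\langle V_K(s)a,\tilde{K}(\cdot,(t,b))\rangle_{H(\tilde{K})}=\langle \tilde{K}(\cdot,(s,a)),\tilde{K}(\cdot,(t,b))\rangle_{H(\tilde{K})}=\tilde{K}((s,a),(t,b))=\langle a,K(s,t)b\rangle_H,
\]
where the middle equality is the reproducing property applied to the section at $(s,a)$. Since this holds for every $a\in H$, the defining relation $\langle V_K(s)a,\varphi\rangle_{H(\tilde{K})}=\langle a,V_K(s)^*\varphi\rangle_H$ forces $V_K(s)^*\tilde{K}(\cdot,(t,b))=K(s,t)b$, which is the asserted adjoint formula; by continuity it determines $V_K(s)^*$ on all of $H(\tilde{K})$.

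The remaining two identities are then immediate substitutions requiring no further estimates. Applying $V_K(s)$ to the vector $K(s,t)b\in H$ and unwinding the definition of $V_K(s)$ gives
\[
V_K(s)V_K(s)^*\tilde{K}(\cdot,(t,b))=V_K(s)\bigl(K(s,t)b\bigr)=\tilde{K}(\cdot,(s,K(s,t)b)),
\]
and reading the adjoint formula with the section $V_K(t)a=\tilde{K}(\cdot,(t,a))$ in place of a generic $\tilde{K}(\cdot,(t,b))$ yields $V_K(s)^*V_K(t)a=K(s,t)a$ for every $a\in H$, i.e. $V_K(s)^*V_K(t)=K(s,t)$.

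I do not anticipate a genuine obstacle here: the statement is foundational, and the whole argument rests on the one computation that the $H(\tilde{K})$-inner product of two kernel sections equals the corresponding value of $\tilde{K}$. The only points demanding care are bookkeeping ones — respecting the physics convention (linearity in the second argument) so that $V_K(s)$ comes out linear rather than conjugate-linear, and confirming density of the span of kernel sections so that the formula for $V_K(s)^*$, verified on that span, extends to all of $H(\tilde{K})$. Positive-definiteness of $K$ enters only through the existence of $H(\tilde{K})$ and the norm bound above; no factorization or dilation machinery is needed at this stage.
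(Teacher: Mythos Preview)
Your proposal is correct and is exactly the natural argument: reduce everything to the reproducing property $\langle \tilde{K}(\cdot,(s,a)),\tilde{K}(\cdot,(t,b))\rangle_{H(\tilde{K})}=\tilde{K}((s,a),(t,b))=\langle a,K(s,t)b\rangle_H$, read off $V_K(s)^*$ on kernel sections, and substitute. The paper does not supply its own proof of this theorem---it is quoted from \cite{JT24-1}---so there is nothing to compare against beyond noting that your argument is the standard RKHS computation one would expect in that reference.
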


\begin{rem}
The theorem generalizes classical notions such as Stinespring's dilatation
of completely positive maps $\varphi$ on $C^{*}$-algebras $\mathfrak{A}$,
where $K\left(s,t\right)=\varphi\left(s^{*}t\right)$ for $s,t\in\mathfrak{A}$.
The RKHS $H(\tilde{K})$ plays the role of the dilation space, realized
as a space of functions on $S\times H$. 

Note that when $K\left(s,s\right)=I$, for all $s\in S$, which is
the case for unital CP maps, the operators $V_{K}\left(s\right)$
are isometric, since 
\[
\left\Vert V_{K}\left(s\right)a\right\Vert _{H(\tilde{K})}^{2}=\left\langle a,K\left(s,s\right)a\right\rangle _{H}=\left\langle a,a\right\rangle _{H}.
\]
Thus, $V_{K}\left(s\right)V_{K}\left(s\right)^{*}:H(\tilde{K})\rightarrow H(\tilde{K})$,
\[
V_{K}\left(s\right)V_{K}\left(s\right)^{*}\tilde{K}\left(\cdot,\left(t,b\right)\right)=\tilde{K}\left(\cdot,\left(s,K\left(s,t\right)b\right)\right)
\]
are projections in $H(\tilde{K})$. In particular, with an iteration,
\begin{eqnarray*}
 &  & V_{K}\left(s_{1}\right)V_{K}\left(s_{1}\right)^{*}\cdots V_{K}\left(s_{n}\right)V_{K}\left(s_{n}\right)^{*}\tilde{K}\left(\cdot,\left(t,b\right)\right)\\
 & = & \tilde{K}\left(\cdot,\left(s_{1},K\left(s_{1},s_{2}\right)\cdots K\left(s_{n-1},s_{n}\right)K\left(s_{n},t\right)b\right)\right).
\end{eqnarray*}
The importance of this is that iterated projections amounts to Kaczmarz
type algorithms and Parseval frames \cite{MR1881441}. 

\end{rem}

The above theorem provides a characterization for operator-valued
p.d. kernels: 
\begin{cor}[\cite{JT24-1}]
\label{cor:B2}Let $K:S\times S\rightarrow B\left(H\right)$. Then
the kernel $K$ is p.d. if and only if there exists a family of operators
$\left\{ V_{s}\right\} _{s\in S}$ from $H$ to another Hilbert space
$\mathscr{L}$, such that $K$ factors as 
\[
K\left(s,t\right)=V_{s}^{*}V_{t},\quad s,t\in S.
\]
If $K\left(s,s\right)=I$ for all $s\in S$, then $V_{s}:H\rightarrow\mathscr{L}$
are isometric. 

Furthermore, if $\mathscr{L}$ is minimal in the sense that
\[
\mathscr{L}=\overline{span}\left\{ V_{s}a:s\in S,\:a\in H\right\} ,
\]
then $\mathscr{L}\simeq H(\tilde{K}).$
\end{cor}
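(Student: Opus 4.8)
The plan is to prove Corollary~\ref{cor:B2} as a direct consequence of Theorem~\ref{thm:B1}. The statement has two directions plus a minimality clause, so let me sketch each.

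For the "only if" direction, assume $K$ is p.d. Then I would simply invoke Theorem~\ref{thm:B1}: take $\mathscr{L} = H(\tilde{K})$ and $V_s = V_K(s)$. The factorization $K(s,t) = V_s^* V_t$ is exactly the last displayed identity $V_K(s)^* V_K(t) = K(s,t)$ of the theorem, so this direction is essentially free once the theorem is in hand. The isometry claim in the case $K(s,s)=I$ follows from the computation $\|V_s a\|^2 = \langle a, V_s^* V_s a\rangle = \langle a, K(s,s) a\rangle = \langle a,a\rangle$, which is precisely the remark following the theorem.

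For the "if" direction, suppose such a factorizing family $\{V_s\}$ exists. Then for any finite selection I would compute directly
\[
\sum_{i,j}\left\langle a_i, K(s_i,s_j) a_j\right\rangle_H = \sum_{i,j}\left\langle a_i, V_{s_i}^* V_{s_j} a_j\right\rangle_H = \left\langle \sum_i V_{s_i}a_i,\ \sum_j V_{s_j}a_j\right\rangle_{\mathscr{L}} = \left\Vert \sum_i V_{s_i} a_i\right\Vert_{\mathscr{L}}^2 \geq 0,
\]
which is the defining positivity condition for $K$. This is the routine half and needs only the definition of adjoint and bilinearity of the inner product.

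The substantive step is the minimality clause: when $\mathscr{L} = \overline{\operatorname{span}}\{V_s a : s\in S,\ a\in H\}$, one must produce a unitary $W:\mathscr{L}\to H(\tilde{K})$ intertwining the two factorizations, i.e. $W V_s = V_K(s)$ for all $s$. The plan is to define $W$ on the dense span by $W\big(\sum_i V_{s_i}a_i\big) = \sum_i V_K(s_i)a_i$ and check it is well-defined and isometric. Both follow at once from the inner-product computation above, since
\[
\Big\langle \textstyle\sum_i V_{s_i}a_i, \sum_j V_{s_j}b_j\Big\rangle_{\mathscr{L}} = \sum_{i,j}\langle a_i, K(s_i,s_j) b_j\rangle_H = \Big\langle \textstyle\sum_i V_K(s_i)a_i, \sum_j V_K(s_j)b_j\Big\rangle_{H(\tilde{K})},
\]
so the map preserves inner products on the dense subspace; well-definedness is the special case where the left argument is zero. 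The map $W$ then extends by continuity to an isometry of $\mathscr{L}$ onto $\overline{\operatorname{span}}\{V_K(s)a\} = H(\tilde{K})$, where surjectivity uses that the $V_K(s)a = \tilde{K}(\cdot,(s,a))$ span $H(\tilde{K})$ densely by construction of the RKHS. I expect the only real care is bookkeeping the well-definedness argument (that the value of $W$ depends only on the vector $\sum_i V_{s_i}a_i$, not on its representation), which is the standard consequence of the isometry identity and presents the main, though modest, obstacle.
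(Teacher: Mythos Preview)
Your argument is correct and is the standard one: the ``only if'' direction is immediate from Theorem~\ref{thm:B1}, the ``if'' direction is the routine positivity computation you wrote, and the minimality clause is exactly the inner-product-preserving extension argument you sketch (with well-definedness falling out of the isometry identity, as you note). The paper itself does not supply a proof of Corollary~\ref{cor:B2}; it is simply stated with a citation to \cite{JT24-1}, so there is no in-paper argument to compare against, but your proof is the expected one.
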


\section{Transfer functions }\label{sec:3}

In systems theory, there is a notion of realization dealing with state
space models. It asks for an implementation/realization of a given
input-output behavior. In more detail, there system identification
take the experimental data from a system and then output a realization.
Such techniques can utilize both input and output data (e.g. eigensystem
realization algorithm) or can only include the output data (e.g. frequency
domain decomposition), see e.g., \cite{MR1465432}. The realizations
of interest, allow representations via fractional linear transformations
by actions of $2\times2$ block-operator matrices, i.e., four operator
entries. The realizations we encounter below are in our present context
of pairs $K,L$ of $B(H)$-valued kernels. Each kernel in the pair
$K,L$ in turn will have a representation as the difference of two
p.d. $B(H)$-valued kernels. In the theorem below, \prettyref{thm:3-1},
we show that there is then an associated transfer operator function
with an explicit realization \eqref{eq:c10}. 

The work in \cite{MR4581177} by J. E. Pascoe and Ryan Tully-Doyle
concerns the relationships between completely positive (CP) maps on
$C^{*}$-algebras. In particular, it introduces the Agler class of
maps, providing transfer function type realizations through Stinespring
coefficients and Wittstock decompositions, which are motivated by
Nevanlinna-Pick interpolation and non-commutative function theory.
The current setting expands on these ideas by focusing on the explicit
construction of transfer operator functions for pairs of $B\left(H\right)$-valued
kernels, drawing parallels to the formulation by Pascoe and Tully-Doyle.
Especially, in the case when $K\left(s,t\right)=\varphi\left(s^{*}t\right)$
is CP map on a $C^{*}$-algebra, the factorization of $K$ in \prettyref{thm:B1}
reduces to $K\left(s,t\right)=V_{s}^{*}V_{t}=V^{*}\pi\left(s^{*}t\right)V$,
where $\left(\varphi,\pi,V,\mathscr{L}\right)$ is the Stinespring
dilation of $\varphi$, and one may choose $\mathscr{L}=H(\tilde{K})$
\cite{JT24-2}. 
\begin{thm}
\label{thm:3-1}Let $K,L:S\times S\rightarrow B\left(H\right)$. Assume
$K=K_{1}-K_{2}$, $L=L_{1}-L_{2}$, and 
\[
K=T^{*}LT
\]
where $K_{i},L_{i}$ are p.d., $i=1,2$, and $T\in B\left(H\right)$.
Let 
\begin{align*}
K_{i} & \left(s,t\right)=V_{K_{i}}\left(s\right)^{*}V_{K_{i}}\left(t\right)\\
L_{i} & \left(s,t\right)=V_{L_{i}}\left(s\right)^{*}V_{L_{i}}\left(t\right)
\end{align*}
be the factorization through the respective RKHSs (see \prettyref{thm:B1}),
i.e., 
\[
V_{K_{i}}\left(s\right):H\rightarrow H(\tilde{K}_{i}),\quad V_{L_{i}}\left(s\right):H\rightarrow H(\tilde{L}_{i}),\;s\in S.
\]
\begin{enumerate}
\item There is a partial isometry 
\begin{equation}
W=\begin{bmatrix}A & B\\
C & D
\end{bmatrix}\label{eq:a3}
\end{equation}
with initial space 
\begin{equation}
\overline{span}\left\{ \begin{bmatrix}\begin{array}{l}
V_{K_{2}}\left(s\right)\\
V_{L_{1}}\left(s\right)T
\end{array}\end{bmatrix}x:s\in S,\;x\in H\right\} \subset H(\tilde{K}_{2})\oplus H(\tilde{L}_{1})\label{eq:a4}
\end{equation}
and final space 
\begin{equation}
\overline{span}\left\{ \begin{bmatrix}\begin{array}{l}
V_{K_{1}}\left(s\right)\\
V_{L_{2}}\left(s\right)T
\end{array}\end{bmatrix}x:s\in S,\;x\in H\right\} \subset H(\tilde{K}_{1})\oplus H(\tilde{L}_{2})
\end{equation}
such that 
\begin{equation}
\begin{bmatrix}\begin{array}{l}
V_{K_{1}}\left(s\right)\\
V_{L_{2}}\left(s\right)T
\end{array}\end{bmatrix}=\begin{bmatrix}A & B\\
C & D
\end{bmatrix}\begin{bmatrix}\begin{array}{l}
V_{K_{2}}\left(s\right)\\
V_{L_{1}}\left(s\right)T
\end{array}\end{bmatrix}.\label{eq:a6}
\end{equation}
\item Furthermore, if 
\begin{equation}
\left(V_{L_{2}}-DV_{L_{1}}\right)^{-1}\label{eq:a7}
\end{equation}
 exists, then 
\begin{equation}
V_{K_{1}}\left(s\right)=T_{12}\left(s\right)V_{K_{2}}\left(s\right)\label{eq:c8}
\end{equation}
with the transfer function 
\begin{equation}
T_{12}\left(s\right)\coloneqq A+BV_{L_{1}}\left(s\right)\left(V_{L_{2}}\left(s\right)-DV_{L_{1}}\left(s\right)\right)^{-1}C\label{eq:c9}
\end{equation}
from $H(\tilde{K}_{2})$ to $H(\tilde{K}_{1})$. Consequently, 
\begin{equation}
K_{1}\left(s,t\right)=V_{K_{1}}^{*}\left(s\right)T_{12}\left(t\right)V_{K_{2}}\left(t\right).\label{eq:c10}
\end{equation}
\end{enumerate}
\end{thm}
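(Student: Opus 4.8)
The plan is to realize $W$ as a partial isometry obtained by the standard RKHS factorization trick, and then to read off the transfer function from the resulting block structure.

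\emph{Part (1).} First I would define a linear map on the dense subspace of the initial space \eqref{eq:a4} spanned by the vectors $\xi_{s,x} := \begin{bmatrix} V_{K_2}(s) \\ V_{L_1}(s)T \end{bmatrix} x$ ($s \in S$, $x \in H$), sending $\xi_{s,x}$ to $\eta_{s,x} := \begin{bmatrix} V_{K_1}(s) \\ V_{L_2}(s)T \end{bmatrix} x$. The decisive computation is that this assignment preserves inner products. Using $V_{K_i}(s)^* V_{K_i}(t) = K_i(s,t)$ and $V_{L_i}(s)^* V_{L_i}(t) = L_i(s,t)$ from \prettyref{thm:B1}, one gets
\[
\left\langle \xi_{s,x}, \xi_{t,y} \right\rangle = \left\langle x, \left(K_2(s,t) + T^* L_1(s,t) T\right) y \right\rangle_H,
\]
\[
\left\langle \eta_{s,x}, \eta_{t,y} \right\rangle = \left\langle x, \left(K_1(s,t) + T^* L_2(s,t) T\right) y \right\rangle_H,
\]
and these two agree precisely because the hypothesis $K = T^* L T$, i.e. $K_1 - K_2 = T^*(L_1 - L_2) T$, rearranges to $K_2 + T^* L_1 T = K_1 + T^* L_2 T$. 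Since the map is inner-product preserving on generators, it is in particular well defined (a null input is sent to a null output) and isometric, hence extends by continuity to a unitary from the initial space onto the final space; extending it by zero on the orthogonal complement yields a partial isometry $W : H(\tilde K_2) \oplus H(\tilde L_1) \to H(\tilde K_1) \oplus H(\tilde L_2)$. Writing $W$ as a $2\times 2$ block matrix with respect to these orthogonal decompositions (so that $A,B,C,D$ have the evident domains and codomains) gives \eqref{eq:a3}, and \eqref{eq:a6} is exactly the relation $W \xi_{s,x} = \eta_{s,x}$ for all $x$.

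\emph{Part (2).} Expanding \eqref{eq:a6} componentwise yields the two operator identities
\[
V_{K_1}(s) = A V_{K_2}(s) + B V_{L_1}(s) T, \qquad V_{L_2}(s) T = C V_{K_2}(s) + D V_{L_1}(s) T .
\]
The second rearranges to $\left(V_{L_2}(s) - D V_{L_1}(s)\right) T = C V_{K_2}(s)$, so under the invertibility assumption \eqref{eq:a7} I can write $V_{L_1}(s) T = V_{L_1}(s)\left(V_{L_2}(s) - D V_{L_1}(s)\right)^{-1} C\, V_{K_2}(s)$ and substitute into the first identity. Factoring $V_{K_2}(s)$ out on the right then produces \eqref{eq:c8} with $T_{12}(s)$ as in \eqref{eq:c9}. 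Finally \eqref{eq:c10} is immediate: since $K_1(s,t) = V_{K_1}(s)^* V_{K_1}(t)$, replacing only the second factor via \eqref{eq:c8} gives $K_1(s,t) = V_{K_1}(s)^* T_{12}(t) V_{K_2}(t)$.

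The main obstacle is the well-definedness of $W$ in Part (1): because the generating vectors $\xi_{s,x}$ satisfy linear relations inside the direct sum of RKHSs, the assignment $\xi_{s,x} \mapsto \eta_{s,x}$ is meaningful only if every linear dependence among the inputs is matched by the same dependence among the outputs. This is exactly what the inner-product identity above guarantees, and it is the one place where the structural hypothesis $K = T^* L T$ enters in an essential way; once the isometry is in hand, the remainder is bookkeeping with the block matrix and the elimination of $T$.
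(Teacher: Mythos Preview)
Your proposal is correct and follows essentially the same route as the paper: both establish the inner-product identity $K_2 + T^*L_1T = K_1 + T^*L_2T$ from the hypothesis, use it to define the partial isometry $W$ on generators, then read off the two block equations and eliminate $T$ via the invertibility assumption to obtain $T_{12}$. Your treatment is in fact slightly more explicit about why the assignment $\xi_{s,x}\mapsto\eta_{s,x}$ is well defined, which the paper leaves implicit.
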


\begin{proof}
Let $K_{i}$ and $L_{i}$ be as stated. For all $s,t\in S$, and all
$x,y\in H$, the following are equivalent: 
\begin{gather*}
K=T^{*}LT\\
\Updownarrow\\
V_{K_{1}}\left(s\right)^{*}V_{K_{1}}\left(t\right)-V_{K_{2}}\left(s\right)^{*}V_{K_{2}}\left(t\right)=T^{*}V_{L_{1}}\left(s\right)^{*}V_{L_{1}}\left(t\right)T-T^{*}V_{L_{2}}\left(s\right)^{*}V_{L_{2}}\left(t\right)T\\
\Updownarrow\\
V_{K_{1}}\left(s\right)^{*}V_{K_{1}}\left(t\right)+T^{*}V_{L_{2}}\left(s\right)^{*}V_{L_{2}}\left(t\right)T=V_{K_{2}}\left(s\right)^{*}V_{K_{2}}\left(t\right)+T^{*}V_{L_{1}}\left(s\right)^{*}V_{L_{1}}\left(t\right)T\\
\Updownarrow\\
\left\langle \begin{bmatrix}\begin{array}{l}
V_{K_{1}}\left(s\right)\\
V_{L_{2}}\left(s\right)T
\end{array}\end{bmatrix}x,\begin{bmatrix}\begin{array}{l}
V_{K_{1}}\left(t\right)\\
V_{L_{2}}\left(t\right)T
\end{array}\end{bmatrix}y\right\rangle =\left\langle \begin{bmatrix}\begin{array}{l}
V_{K_{2}}\left(s\right)\\
V_{L_{1}}\left(s\right)T
\end{array}\end{bmatrix}x,\begin{bmatrix}\begin{array}{l}
V_{K_{2}}\left(t\right)\\
V_{L_{1}}\left(t\right)T
\end{array}\end{bmatrix}y\right\rangle 
\end{gather*}
It follows that 
\[
\begin{bmatrix}\begin{array}{l}
V_{K_{2}}\left(s\right)\\
V_{L_{1}}\left(s\right)T
\end{array}\end{bmatrix}x\longmapsto\begin{bmatrix}\begin{array}{l}
V_{K_{1}}\left(s\right)\\
V_{L_{2}}\left(s\right)T
\end{array}\end{bmatrix}x
\]
is isometric. 

Let $W\coloneqq\begin{bmatrix}A & B\\
C & D
\end{bmatrix}$ be the corresponding partial isometry, i.e., \eqref{eq:a3}--\eqref{eq:a6}.
Thus, 
\begin{align}
V_{K_{1}} & =AV_{K_{2}}+BV_{L_{1}}T\label{eq:1}\\
V_{L_{2}}T & =CV_{K_{2}}+DV_{L_{1}}T\label{eq:2}
\end{align}

From \eqref{eq:2}, we get 
\[
T=\left(V_{L_{2}}-DV_{L_{1}}\right)^{-1}CV_{K_{2}}
\]
assuming the inverse exists. Substitute this into \eqref{eq:1}: 
\begin{align*}
V_{K_{1}} & =AV_{K_{2}}+BV_{L_{1}}T\\
 & =AV_{K_{2}}+BV_{L_{1}}\left(V_{L_{2}}-DV_{L_{1}}\right)^{-1}CV_{K_{2}}\\
 & =\left(A+BV_{L_{1}}\left(V_{L_{2}}-DV_{L_{1}}\right)^{-1}C\right)V_{K_{2}}
\end{align*}
which yield \eqref{eq:c8}--\eqref{eq:c9}, and \eqref{eq:c10} follows
since $K_{1}\left(s,t\right)=V_{K_{1}}\left(s\right)^{*}V_{K_{1}}\left(t\right)$. 
\end{proof}
The relationship between $K$ and $L$ in \prettyref{thm:3-1} can
be written as 
\begin{equation}
K_{1}-T^{*}L_{1}T=K_{2}-T^{*}L_{2}T.\label{eq:c11}
\end{equation}
This yields the following equivalence relation:
\begin{defn}
Let $Pos\left(S\right)$ be the set of $B\left(H\right)$-valued p.d.
kernels on $S\times S$. Fix an operator $T\in B\left(H\right)$,
and let 
\[
K_{1}\sim_{T}K_{2}\Longleftrightarrow\exists L_{1},L_{2}\in Pos\left(S\right)\:\text{s.t.}\:(\ref{eq:c11})\;\text{holds.}
\]
Denote by $\left[K\right]_{T}=\left\{ K'\in Pos\left(S\right):K'\sim_{T}K\right\} $,
and $Pos\left(S\right)/\sim_{T}$ the set of all equivalence classes. 
\end{defn}

\begin{cor}
Given $T\in B\left(H\right)$, there is a transitive action within
the set of RKHSs associated with the equivalence class of any $K$
under $T$. Specifically, for any $K\in Pos\left(S\right)$, the set
\[
\left\{ H(\tilde{K}'):K'\in[K]_{T}\right\} 
\]
exhibits a transitive action via the transfer function \eqref{eq:c8},
provided the invertibility condition \eqref{eq:a7} is satisfied. 
\end{cor}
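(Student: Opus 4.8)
The plan is to show that $\sim_T$ organizes the RKHSs $\{H(\tilde{K}') : K' \in [K]_T\}$ into a connected system whose morphisms are the transfer functions of \prettyref{thm:3-1}, and that these transfer functions obey identity, composition, and inversion laws, so that the family carries a transitive groupoid action. First I would record that $\sim_T$ is genuinely an equivalence relation (as asserted in the Definition): reflexivity follows by taking $L_1 = L_2 = 0$, symmetry by interchanging the two witness kernels, and transitivity by adding witnesses, since if $K_1 - T^{*}L_1 T = K_2 - T^{*}L_2 T$ and $K_2 - T^{*}M_1 T = K_3 - T^{*}M_2 T$, then $K_1 - T^{*}(L_1+M_1)T = K_3 - T^{*}(L_2+M_2)T$ with $L_i + M_i \in Pos(S)$. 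Consequently any pair $K_1, K_2 \in [K]_T$ satisfies $K_1 \sim_T K_2$, and \eqref{eq:c11} holds for suitable witnesses $L_1, L_2$.

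Next, for each such ordered pair I would apply \prettyref{thm:3-1} directly. Matching the relation $K_1 - T^{*}L_1 T = K_2 - T^{*}L_2 T$ with the hypothesis $K = T^{*}L T$ for $K = K_1 - K_2$ and $L = L_1 - L_2$, part (2) produces—under the invertibility hypothesis \eqref{eq:a7}—the transfer function $T_{12}(s)$ of \eqref{eq:c9}, satisfying $V_{K_1}(s) = T_{12}(s)V_{K_2}(s)$, i.e.\ a family of maps $H(\tilde{K}_2) \to H(\tilde{K}_1)$. Since every pair of elements of $[K]_T$ is equivalent, this already yields transitivity on the object set: any RKHS in the family is reached from any other by a transfer function.

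To upgrade ``connected'' to ``transitive action'' I would then verify the groupoid laws. The identity is $T_{KK}(s) = I$ on $H(\tilde{K})$, the case $K_1 = K_2$, $L_1 = L_2$. For composition, given $K_1 \sim_T K_2 \sim_T K_3$, the defining relations $V_{K_1}(s) = T_{12}(s)V_{K_2}(s)$ and $V_{K_2}(s) = T_{23}(s)V_{K_3}(s)$ give $V_{K_1}(s) = T_{12}(s)T_{23}(s)V_{K_3}(s)$, which identifies $T_{13}(s)$ with $T_{12}(s)T_{23}(s)$ on $\mathrm{ran}\,V_{K_3}(s)$; invertibility then forces $T_{21}(s) = T_{12}(s)^{-1}$ on the relevant subspace. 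Transitivity of the resulting action is exactly the conclusion of the previous paragraph.

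The hard part will be passing from agreement on the ranges $\mathrm{ran}\,V_{K_j}(s)$ to a clean, choice-independent composition law on the whole RKHSs. The transfer function $T_{12}(s)$ depends on the witnesses $L_1, L_2$ and on the partial isometry $W$ furnished by \prettyref{thm:3-1}, so one must either fix these choices consistently or check that the induced morphism is canonical on the generating vectors $\tilde{K}(\cdot,(s,a))$; and one must arrange that the invertibility condition \eqref{eq:a7} holds simultaneously along each composition, so that the composite is again of the form \eqref{eq:c9}. Once these compatibilities are secured, $[K]_T$ indexes a connected groupoid of RKHSs with the transfer functions as morphisms, which is the asserted transitive action.
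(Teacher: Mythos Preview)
Your core step---invoking \prettyref{thm:3-1} to produce, for each ordered pair $K_1\sim_T K_2$, a transfer function $T_{12}(s)$ with $V_{K_1}(s)=T_{12}(s)V_{K_2}(s)$---is exactly what the paper does, and is really all the paper does. Its proof is three lines: recall that $H(\tilde K_i)=\overline{\mathrm{span}}\{V_{K_i}(s)a\}$, quote \eqref{eq:c8}, and conclude $H(\tilde K_1)=T_{12}(H(\tilde K_2))$. The paper reads ``transitive action'' informally, as the statement that any RKHS in the family is the image of any other under some transfer map; it does not set up a groupoid, does not check composition or inverse laws, and does not address choice-dependence of $T_{12}$ on the witnesses $L_1,L_2$.

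So your verification that $\sim_T$ is an equivalence relation and your attempt to certify identity, composition, and inversion go well beyond what the paper proves. That extra rigor is well motivated---the word ``action'' does suggest something like a groupoid---but be aware that the difficulties you flag in your last paragraph (dependence on $L_i$ and $W$, simultaneous validity of \eqref{eq:a7} along a composition, extending from $\mathrm{ran}\,V_{K_j}(s)$ to the full RKHS) are simply not resolved in the paper; they are swept into the informal reading of the conclusion. If your aim is to match the paper, you can stop after the second paragraph of your plan. If your aim is to make ``transitive action'' precise, then the issues you isolate are the genuine content and would need to be settled, not merely acknowledged.
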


\begin{proof}
Suppose $K_{1}\sim_{T}K_{2}$. By \prettyref{thm:B1}, 
\begin{align*}
H(\tilde{K}_{1}) & =\overline{span}\left\{ V_{K_{1}}\left(s\right)a:s\in S,\;a\in H\right\} ,\\
H(\tilde{K}_{2}) & =\overline{span}\left\{ V_{K_{2}}\left(s\right)a:s\in S,\;a\in H\right\} .
\end{align*}
The transfer function from \eqref{eq:c8}, denoted by $T_{12}$, satisfies
that 
\[
V_{K_{1}}\left(s\right)a=T_{21}\left(s\right)V_{K_{2}}\left(s\right)a
\]
for all $s\in S$, and $a\in H$. Thus, 
\[
H(\tilde{K}_{1})=T_{12}(H(\tilde{K}_{2})).
\]
\end{proof}
Next, we identify a non-commutative Radon-Nikodym derivative from
the system $\left(K_{1},K_{2},L_{1},L_{2}\right)$ under the assumption
that $K_{1}\leq K_{2}$. 

Here, for $K$ and $L$ p.d. kernels, $L\leq K\Longleftrightarrow K-L$
is p.d., i.e., 
\[
\sum_{i,j=1}^{n}\left\langle h_{i},L\left(s_{i},s_{j}\right)h_{j}\right\rangle _{H}\leq\sum_{i,j=1}^{n}\left\langle h_{i},K\left(s_{i},s_{j}\right)h_{j}\right\rangle _{H}
\]
for all $\left(s_{i}\right)_{1}^{n}$ in $S$, $\left(h_{i}\right)_{1}^{n}$
in $\mathscr{H}$, and all $n\in\mathbb{N}$.
\begin{thm}
\label{thm:3-4}Suppose $K_{1}\sim_{T}K_{2}$, i.e., $K_{1}-T^{*}L_{1}T=K_{2}-T^{*}L_{2}T$
with $L_{1},L_{2}\in Pos\left(S\right)$. Assume the invertibility
condition \eqref{eq:a7} holds, and $K_{1}\leq K_{2}$. Then 
\[
\sqrt{\frac{dK_{1}}{dK_{2}}}=T_{12}
\]
where $T_{12}$ is the transfer function in \eqref{eq:c9}. 

Especially, $T_{12}:H(\tilde{K}_{2})\rightarrow H(\tilde{K}_{1})\subset H(\tilde{K}_{2})$
is a positive self-adjoint operator in $H(\tilde{K}_{2})$. 
\end{thm}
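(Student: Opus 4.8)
The plan is to identify the transfer operator $T_{12}$ with the positive square root of a Radon--Nikodym operator attached to the order relation $K_{1}\leq K_{2}$, and to force the identification by uniqueness of the minimal RKHS factorization (\prettyref{cor:B2}) together with uniqueness of the positive square root. The self-adjointness and positivity asserted in the ``especially'' clause are not visible from \eqref{eq:c9}; they will emerge only after $T_{12}$ is recognized as this square root.

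First I would construct $\Delta:=dK_{1}/dK_{2}$ as an operator on $H(\tilde{K}_{2})$. On the algebraic span $\mathcal{D}:=\mathrm{span}\{V_{K_{2}}(s)a:s\in S,\,a\in H\}$, which is dense in $H(\tilde{K}_{2})$, define the sesquilinear form
\[
B\Big(\sum_{i}V_{K_{2}}(s_{i})a_{i},\,\sum_{j}V_{K_{2}}(t_{j})b_{j}\Big):=\sum_{i,j}\big\langle a_{i},K_{1}(s_{i},t_{j})b_{j}\big\rangle_{H}.
\]
By \prettyref{thm:B1} the squared $H(\tilde{K}_{2})$-norm of $u=\sum_{i}V_{K_{2}}(s_{i})a_{i}$ equals $\sum_{i,j}\langle a_{i},K_{2}(s_{i},s_{j})a_{j}\rangle_{H}$, so $0\leq K_{1}\leq K_{2}$ gives $0\leq B(u,u)\leq\|u\|_{H(\tilde{K}_{2})}^{2}$. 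Hence $B$ is well defined, bounded and positive on $\mathcal{D}$, so it extends to $H(\tilde{K}_{2})$ and is represented by a unique positive contraction $\Delta\in B(H(\tilde{K}_{2}))$ with
\[
\big\langle V_{K_{2}}(s)a,\,\Delta V_{K_{2}}(t)b\big\rangle_{H(\tilde{K}_{2})}=\big\langle a,K_{1}(s,t)b\big\rangle_{H},\qquad s,t\in S,\;a,b\in H.
\]

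Next I would read off the factorization determined by $\sqrt{\Delta}$. Put $W(s):=\sqrt{\Delta}\,V_{K_{2}}(s):H\to H(\tilde{K}_{2})$; self-adjointness of $\sqrt{\Delta}$ and the displayed identity give
\[
W(s)^{*}W(t)=V_{K_{2}}(s)^{*}\Delta V_{K_{2}}(t)=K_{1}(s,t),
\]
while $\overline{span}\{W(s)a:s\in S,\,a\in H\}=\overline{\mathrm{ran}\,\sqrt{\Delta}}$. Thus $\{W(s)\}$ is a minimal factorization of $K_{1}$ in the sense of \prettyref{cor:B2}, so $\overline{\mathrm{ran}\,\sqrt{\Delta}}\simeq H(\tilde{K}_{1})$ by a unitary carrying $W(s)$ to $V_{K_{1}}(s)$. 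Identifying $H(\tilde{K}_{1})$ with this subspace of $H(\tilde{K}_{2})$, I obtain $V_{K_{1}}(s)=\sqrt{\Delta}\,V_{K_{2}}(s)$ for all $s$. Comparing with the realization \eqref{eq:c8}, and using that $\sqrt{\Delta}$ is a single operator agreeing with $T_{12}$ on the total set $\{V_{K_{2}}(s)a\}$, I conclude $T_{12}=\sqrt{\Delta}=\sqrt{dK_{1}/dK_{2}}$. The final assertion is then immediate, since $\sqrt{\Delta}$ is positive and self-adjoint on $H(\tilde{K}_{2})$ with range in $\overline{\mathrm{ran}\,\sqrt{\Delta}}=H(\tilde{K}_{1})$.

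The main obstacle is precisely this matching step. Formula \eqref{eq:c9} exhibits $T_{12}$ as an a priori $s$-dependent expression mapping between the two distinct spaces $H(\tilde{K}_{2})$ and $H(\tilde{K}_{1})$, for which neither positivity nor self-adjointness is apparent. The content of the theorem is that, under $K_{1}\leq K_{2}$, this family collapses to the single positive operator $\sqrt{\Delta}$ on $H(\tilde{K}_{2})$, and I would secure this through uniqueness of the minimal realization (\prettyref{cor:B2}) and uniqueness of the positive square root, rather than by direct manipulation of \eqref{eq:c9}. A more computational alternative is to insert $V_{K_{1}}(s)^{*}=V_{K_{2}}(s)^{*}T_{12}^{*}$ into \eqref{eq:c10}, obtaining $K_{1}(s,t)=V_{K_{2}}(s)^{*}T_{12}^{*}T_{12}V_{K_{2}}(t)$, and to read off $T_{12}^{*}T_{12}=\Delta$ from the characterizing identity; granting in addition that $T_{12}$ is self-adjoint this forces $T_{12}^{2}=\Delta$, i.e.\ $T_{12}=\sqrt{\Delta}$. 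That route, however, presupposes both the $s$-independence and the self-adjointness of $T_{12}$, which is exactly what the square-root identification above is meant to supply.
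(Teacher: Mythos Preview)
Your proposal is correct and follows essentially the same approach as the paper: construct the Radon--Nikodym operator $\Phi=dK_{1}/dK_{2}$ (the paper quotes its existence and the relation $V_{K_{1}}(s)=\Phi^{1/2}V_{K_{2}}(s)$ from an earlier reference, whereas you build $\Delta$ directly via a bounded positive sesquilinear form and recover the relation through the minimal-realization uniqueness of \prettyref{cor:B2}), and then match this against the transfer-function identity \eqref{eq:c8} using density of $\{V_{K_{2}}(s)a\}$ in $H(\tilde{K}_{2})$. Your explicit construction simply unpacks the cited lemma, and your candid discussion of the a priori $s$-dependence of $T_{12}$ parallels the paper's (equally terse) appeal to density.
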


\begin{proof}
We will use the following result \cite{JT24-3}: For p.d. kernels
$K,L:S\times S\rightarrow B\left(H\right)$, it holds that 
\[
L\leq K\Longleftrightarrow L\left(s,t\right)=V_{K}\left(s\right)^{*}\Phi V_{K}\left(t\right)
\]
where $\Phi$ is a unique self-adjoint operator in $H(\tilde{K})$
with $0\leq\Phi\leq I$. 

The operator $\Phi$ will be referred to as the Radon-Nikodym derivative
of $L$ with respect to $K$, denoted by $\Phi=dL/dK$. In this case,
we have 
\[
V_{L}\left(s\right)=\Phi^{1/2}V_{K}\left(s\right)
\]
and $H(\tilde{K})\subset H(\tilde{L})$.

Now consider $K_{1}-T^{*}L_{1}T=K_{2}-T^{*}L_{2}T$ as in the statement
of the theorem. By assuming $K_{1}\leq K_{2}$, we get 
\[
V_{K_{1}}\left(s\right)=\Phi^{1/2}V_{K_{2}}\left(s\right).
\]
But it follows from \prettyref{thm:3-1} that 
\[
V_{K_{1}}\left(s\right)=T_{12}\left(s\right)V_{K_{2}}\left(s\right).
\]
Thus, the uniqueness of the Radon-Nikodym derivative, along with the
density of 
\[
span\left\{ V_{K_{2}}\left(s\right)a:s\in S,\:a\in H\right\} 
\]
in $H(\tilde{K}_{2})$, shows that $\Phi^{1/2}=T_{12}$. 
\end{proof}
Our results here are part of a non-commutative approach to stochastic
calculus. It will be made explicit in \prettyref{sec:4} below where
we present an explicit correspondence between (i) $B(H)$-valued positive
definite kernels $K$ on the one and associated (ii) $H$-valued Gaussian
processes $W$, on the other, see especially Definitions \ref{def:d1}
and \ref{def:d2}; combined with \prettyref{thm:B4}.

The correspondence is explicit in both directions, from $K$ to $W$
in \eqref{eq:d8}, and from $W$ to $K$ in \prettyref{cor:d4}. Our
transfer function realization from \prettyref{thm:3-1} is illustrated
in \prettyref{fig:c1} below.

\begin{figure}[H]
\begin{tikzpicture}[node distance=1.2cm, auto]

  \node (matrix1) at (0, 0) {
    $\begin{pmatrix}
      W_s^{(L_1)} \\
      W_t^{(L_2)}
    \end{pmatrix}$
  };

  \node (matrix2) [right=of matrix1, draw, thick, minimum width=1cm, minimum height=1.3cm, align=center] {
    $\begin{pmatrix}
      A & B \\
      C & D
    \end{pmatrix}$
  };

  \node (matrix3) [right=of matrix2] {
    $\begin{pmatrix}
      W_s^{(K_1)} \\
      W_t^{(K_2)}
    \end{pmatrix}$
  };

  \draw[->] ([yshift=2mm, xshift=-2mm]matrix1.east) -- ([yshift=2mm, xshift=0mm]matrix2.west);
  \draw[->] ([yshift=-2mm, xshift=-2mm]matrix1.east) -- ([yshift=-2mm, xshift=0mm]matrix2.west);

  \draw[->] ([yshift=2mm, xshift=0mm]matrix2.east) -- ([yshift=2mm, xshift=2mm]matrix3.west);
  \draw[->] ([yshift=-2mm, xshift=0mm]matrix2.east) -- ([yshift=-2mm, xshift=2mm]matrix3.west);

\end{tikzpicture}

\caption{ Input/output model for the $H$-valued Gaussian processes $\left(W_{s}\right)$
corresponding to the four p.d. kernels from \prettyref{thm:3-1}.}\label{fig:c1}
\end{figure}
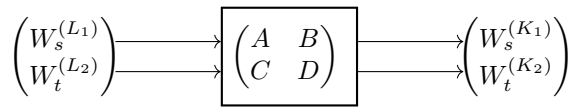

\section{$H$-valued Gaussian processes}\label{sec:4}

In probability and stochastic, we think of Gaussian processes as follows:
Specify a probability space, and a system of random variables, indexed
by a set, say $S$, so $\left\{ W_{s},s\in S\right\} $. We say that
this is a Gaussian process, if the system $\left\{ W_{s},s\in S\right\} $
is jointly Gaussian. The corresponding covariance kernel will then
be positive definite (p.d.), and by a standard construction, one can
show that every positive definite kernel arises this way. Note that
from first principles, every p.d. kernel on $S\times S$ arises this
way from a choice of Gaussian process $\left\{ W_{s},s\in S\right\} $.
In particular, it follows that Gaussian processes are determined by
their first two moments; where the p.d. kernel in question is interpreted
as second moments. The mean of the process is “first moments.” Suppose
$\left\{ W_{h},h\in H\right\} $ is indexed by a Hilbert space $H$.
If the inner product of $H$ coincides with the second moments, i.e.,
the covariance kernel, we talk about a Gaussian Hilbert space. See
e.g., \cite{MR4641110}.

Since Ito's early work, the stochastic analysis of Gaussian processes
has often relied on Hilbert space techniques, as seen in \cite{MR3402823,MR4302453,MR45307}
and related literature. Here we want to call attention to the following
distinction: (i) scalar-valued Gaussian processes, potentially indexed
by a Hilbert space via an Ito-isometry, and (ii) Hilbert space-valued
Gaussian processes, see e.g., \cite{MR4641110,MR4414825,MR4101087,MR4073554,MR3940383}. 

We focus on the latter (ii), as it offers a more versatile approach
to model complex covariance structures in large datasets. This construction,
detailed below, is based on the results from \prettyref{sec:2}, utilizing
both $B(H)$-valued positive definite kernels, and their induced scalar
valued dilations, see \prettyref{thm:B1} and Corollary \ref{cor:B2}.

A key feature of Gaussian processes is that they are determined by
their first two moments, also when interpreted in a more general metric
framework. See e.g., \cite{MR4728479,MR4260603,MR3447225}.

We now turn to the details for our quantum kernel method. We believe
that it is appealing because it has parallels to the well established
toolbox of classical kernel theory for feature selection. And our
$B(H)$-valued kernels allow us to incorporate quantum features.

Let $S$ be a set, and $H$ a Hilbert space. We shall introduce choices
of probability spaces $(\Omega,\mathcal{F},\mathbb{P})$, i.e., sample
space $\Omega$, sigma-algebra $\mathcal{F}$, and probability measure
$\mathbb{P}$ defined on $\mathcal{F}$. Then a $H$-valued Gaussian
process $W$ makes reference to a choice of probability space $(\Omega,\mathcal{F},\mathbb{P})$.
We say that $W$ is a H-valued random process indexed by $S$ if the
following holds: $W$ is specified as a random function $W$ (written
$W_{s}$) from $S$ into $H$ such that for all $a$ in $H$, and
$s$ in $S$, the corresponding system $\left\langle a,W_{s}\right\rangle =\left\langle a,W_{s}\left(\cdot\right)\right\rangle $
as scalar random variables defined on $\Omega$, has a jointly Gaussian
distribution, i.e., is a Gaussian process. (In fact, this is the scalar
valued Gaussian process defined from the kernel $\tilde{K}$ introduced
in \prettyref{thm:B1} where $K$ is a $B(H)$-valued p.d. kernel.
We say that $K$ is the $B(H)$ valued covariance kernel for $W$.)
More precisely: 
\begin{defn}
\label{def:d1} $\left\{ W_{s}\right\} _{s\in S}$ is an $H$-valued
Gaussian process if, for all $a,b\in H$, and all $s,t\in S$, 
\[
\mathbb{E}\left[\left\langle a,W_{s}\right\rangle _{H}\left\langle W_{t},b\right\rangle _{H}\right]=\left\langle a,K\left(s,t\right)b\right\rangle _{H}
\]
where $K$ is a $B\left(H\right)$-valued p.d. kernel, and 
\[
\left\langle W_{s},a\right\rangle _{H}\sim N\left(0,\left\langle a,K\left(s,s\right)a\right\rangle _{H}\right),
\]
i.e., mean zero Gaussian, with variance $\left\langle a,K\left(s,s\right)a\right\rangle _{H}$.
\end{defn}

Fix $S\times S\xrightarrow{\;K\;}B\left(H\right)$, and define $\tilde{K}:\left(S\times H\right)\times\left(S\times H\right)\rightarrow\mathbb{C}$,
given by 
\[
\tilde{K}\left(\left(s,a\right),\left(t,b\right)\right)=\left\langle a,K\left(s,t\right)b\right\rangle _{H},\quad\forall s,t\in S,\ \forall a,b\in H,
\]
defined on $S\times H$. Let $H(\tilde{K})$ be the corresponding
RKHS (consisting of scalar-valued functions on $S\times H$). 

In what follows, we assume all $H(\tilde{K})$ induced by $K$ are
separable. 
\begin{defn}
\label{def:d2}Consider the $H$-valued Gaussian process $W_{s}:\Omega\rightarrow H$,
\begin{equation}
W_{t}=\sum_{i}\left(V_{t}^{*}\varphi_{i}\right)Z_{i},\label{eq:d8}
\end{equation}
where $\left(\varphi_{i}\right)$ is an ONB in $H(\tilde{K})$. Following
standard conventions, here $\left\{ Z_{i}\right\} $ refers to a choice
of an independent identically distributed (i.i.d.) system of standard
scalar Gaussian $N(0,1)$ random variables, and with an index matching
the choice of ONB.
\end{defn}

\begin{thm}[\cite{JT24-1,JT24-3}]
\label{thm:B4}We have 
\[
\mathbb{E}\left[\left\langle a,W_{s}\right\rangle _{H}\left\langle W_{t},b\right\rangle _{H}\right]=\left\langle a,K\left(s,t\right)b\right\rangle _{H}.
\]
\end{thm}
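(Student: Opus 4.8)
The plan is to compute the expectation directly from the series representation \eqref{eq:d8} and reduce everything to the reproducing property of $H(\tilde{K})$. First I would expand the two inner products using the definition $W_t=\sum_i\left(V_t^*\varphi_i\right)Z_i$, so that
\[
\left\langle a,W_s\right\rangle_H=\sum_i\left\langle a,V_s^*\varphi_i\right\rangle_H Z_i,\qquad \left\langle W_t,b\right\rangle_H=\sum_j\overline{\left\langle b,V_t^*\varphi_j\right\rangle_H}\,Z_j,
\]
where $V_s=V_K(s)$ is the operator from \prettyref{thm:B1}. Taking the expectation and using that the $Z_i$ are i.i.d.\ $N(0,1)$, so that $\mathbb{E}[Z_iZ_j]=\delta_{ij}$, collapses the double sum to a single sum $\sum_i\left\langle a,V_s^*\varphi_i\right\rangle_H\,\overline{\left\langle b,V_t^*\varphi_j\right\rangle_H}$ over the diagonal $i=j$.

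Next I would move the operators across the inner products. Using the adjoint relation for $V_s$, I would rewrite $\left\langle a,V_s^*\varphi_i\right\rangle_H=\left\langle V_s a,\varphi_i\right\rangle_{H(\tilde{K})}$ and similarly for the $b,t$ term, so the diagonal sum becomes
\[
\sum_i\left\langle V_s a,\varphi_i\right\rangle_{H(\tilde{K})}\left\langle \varphi_i,V_t b\right\rangle_{H(\tilde{K})}.
\]
Since $(\varphi_i)$ is an ONB of $H(\tilde{K})$, Parseval's identity identifies this sum with $\left\langle V_s a,V_t b\right\rangle_{H(\tilde{K})}=\left\langle a,V_s^*V_t b\right\rangle_H$. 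Finally, the factorization $V_s^*V_t=V_K(s)^*V_K(t)=K(s,t)$ from \prettyref{thm:B1} gives $\left\langle a,K(s,t)b\right\rangle_H$, which is the desired identity.

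The main obstacle is not any single algebraic step but the justification of interchanging the infinite sum with the expectation and the convergence of the defining series \eqref{eq:d8} itself. Because $(\varphi_i)$ is an ONB and $V_s^*\varphi_i$ are the coefficients, one has $\sum_i\lVert V_s^*\varphi_i\rVert_H^2<\infty$ only after pairing against a fixed $a\in H$, where $\sum_i\left|\left\langle V_s a,\varphi_i\right\rangle\right|^2=\lVert V_s a\rVert^2<\infty$ by Parseval; this square-summability is exactly what guarantees that $\left\langle a,W_s\right\rangle_H$ is a well-defined $L^2(\Omega,\mathbb{P})$ Gaussian random variable and licenses the termwise expectation via dominated convergence. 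I would therefore be careful to phrase the computation in terms of the scalar random variables $\left\langle a,W_s\right\rangle_H$ rather than the $H$-valued object $W_s$ directly, since the separability assumption on $H(\tilde{K})$ ensures the ONB is countable and the $L^2$-limits behave as expected. Everything downstream is then a routine application of the ONB expansion together with \prettyref{thm:B1}.
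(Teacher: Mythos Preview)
Your argument is correct and is exactly the computation the paper has in mind. Note, however, that the paper does not actually supply a proof of \prettyref{thm:B4}; it is stated with a citation to \cite{JT24-1,JT24-3}. The identical chain of equalities you wrote out---expand via \eqref{eq:d8}, use $\mathbb{E}[Z_iZ_j]=\delta_{ij}$, push $V_s$ across by adjunction, collapse with Parseval, and invoke $V_s^*V_t=K(s,t)$---does appear verbatim in the paper in the proof of \prettyref{thm:d6}, just with the block kernel $M$ on $H\oplus H$ in place of $K$. So your proposal matches the paper's method line for line, and in fact you go a step further by commenting on the $L^2$-convergence and the interchange of sum and expectation, which the paper leaves implicit.
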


\prettyref{thm:B4} gives another characterization for operator-valued
p.d. kernels, in terms of direct integral decomposition:

\begin{cor}
\label{cor:d4}Let $K:S\times S\rightarrow B\left(H\right)$. Then
the kernel $K$ is p.d. if and only if it decomposes as
\[
K\left(s,t\right)=\int_{\Omega}\left|W_{s}\left\rangle \right\langle W_{t}\right|d\mathbb{P},
\]
where $\left\{ W_{t}\right\} _{t\in S}$ is the $H$-valued Gaussian
process from above. (In the expression under the integral we use Dirac’s
ket-bra notation for rank-one operators. Hence, for every $s,t$,
the operator $K(s,t)$ is the expectation of a random field of rank-one
operators.)
\end{cor}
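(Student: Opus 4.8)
The goal is to prove Corollary~\ref{cor:d4}, namely the equivalence
\[
K\left(s,t\right)\ \text{is p.d.}\ \Longleftrightarrow\ K\left(s,t\right)=\int_{\Omega}\left|W_{s}\left\rangle \right\langle W_{t}\right|\,d\mathbb{P},
\]
where $W$ is the $H$-valued Gaussian process attached to $K$. My plan is to treat the two implications separately, with the backward direction being essentially trivial and the forward direction being the substantive content that is already fully powered by the results stated earlier.

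For the forward direction, assume $K$ is p.d. Then \prettyref{thm:B1} supplies the factorization $K\left(s,t\right)=V_{K}\left(s\right)^{*}V_{K}\left(t\right)$ through the RKHS $H(\tilde{K})$, and \prettyref{def:d2} constructs the associated $H$-valued Gaussian process via $W_{t}=\sum_{i}\left(V_{t}^{*}\varphi_{i}\right)Z_{i}$ for an ONB $\left(\varphi_{i}\right)$ and i.i.d.\ $N(0,1)$ variables $\left(Z_{i}\right)$. The key step is to unpack the integral $\int_{\Omega}\left|W_{s}\left\rangle \right\langle W_{t}\right|\,d\mathbb{P}$ against test vectors: for $a,b\in H$, the matrix entry of the rank-one operator is $\left\langle a,\left|W_{s}\left\rangle \right\langle W_{t}\right|b\right\rangle _{H}=\left\langle a,W_{s}\right\rangle _{H}\left\langle W_{t},b\right\rangle _{H}$, so that
\[
\left\langle a,\Big(\int_{\Omega}\left|W_{s}\left\rangle \right\langle W_{t}\right|\,d\mathbb{P}\Big)b\right\rangle _{H}=\mathbb{E}\left[\left\langle a,W_{s}\right\rangle _{H}\left\langle W_{t},b\right\rangle _{H}\right].
\]
By \prettyref{thm:B4} the right-hand side equals $\left\langle a,K\left(s,t\right)b\right\rangle _{H}$, and since $a,b\in H$ are arbitrary the two operators coincide. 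This gives the claimed integral representation; I would only need to note that the interchange of the expectation with the inner-product pairing is justified by the separability assumption on $H(\tilde{K})$ and the fact that $\sum_{i}\|V_{t}^{*}\varphi_{i}\|^{2}=\langle K(t,t)\cdot,\cdot\rangle$-type estimates make the relevant second moments finite, so the Bochner/weak integral is well defined.

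For the backward direction, suppose $K$ admits the representation $K\left(s,t\right)=\int_{\Omega}\left|W_{s}\left\rangle \right\langle W_{t}\right|\,d\mathbb{P}$ for some $H$-valued square-integrable random field $W$. Positive definiteness is then immediate: for any $\left(s_{i}\right)_{1}^{n}$ in $S$ and $\left(a_{i}\right)_{1}^{n}$ in $H$,
\[
\sum_{i,j}\left\langle a_{i},K\left(s_{i},s_{j}\right)a_{j}\right\rangle _{H}=\int_{\Omega}\sum_{i,j}\left\langle a_{i},W_{s_{i}}\right\rangle _{H}\left\langle W_{s_{j}},a_{j}\right\rangle _{H}\,d\mathbb{P}=\int_{\Omega}\Big|\sum_{i}\left\langle W_{s_{i}},a_{i}\right\rangle _{H}\Big|^{2}\,d\mathbb{P}\geq0,
\]
which is exactly the defining inequality for a $B(H)$-valued p.d.\ kernel.

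The main obstacle, such as it is, is purely a matter of integrability and measurability bookkeeping rather than a conceptual difficulty: one must confirm that $\left|W_{s}\left\rangle \right\langle W_{t}\right|$ is a genuinely integrable $B(H)$-valued (indeed trace-class-on-rank-one) random operator so that the integral defining $K(s,t)$ exists in the appropriate (weak operator) sense, and that passing the scalar pairing inside the integral is legitimate. All of this is controlled by \prettyref{def:d1}'s moment condition and the separability hypothesis, so I expect the bulk of the argument to reduce to citing \prettyref{thm:B4} for the forward direction and a one-line Fubini/positivity computation for the converse.
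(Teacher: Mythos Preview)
Your proof is correct and follows exactly the approach the paper intends: the corollary is stated without proof as an immediate consequence of \prettyref{thm:B4}, and your argument---testing the integral against $a,b\in H$, invoking \prettyref{thm:B4} for the forward direction, and the one-line positivity computation for the converse---is precisely the implicit reasoning.
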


\subsection*{Conditioning, joint distributions}

Consider a system of positive definite kernels $\left(K_{1},K_{2},L_{1},L_{2}\right)$
defined on $S\times S$, such that 
\begin{equation}
K_{1}-TL_{1}^{-1}T^{*}=K_{2}-TL_{2}^{-1}T^{*}\label{eq:d4}
\end{equation}
where $T:S\times S\rightarrow B\left(H\right)$, and $L_{1}^{-1},L_{2}^{-1}\in Pos\left(S\right)=$
the set of all p.d. kernels on $S\times S$. Note $T$ is assumed
to be a $B\left(H\right)$-valued function, instead of a fixed operator
in $B\left(H\right)$. 
\begin{rem}
An example where $L,L^{-1}\in Pos\left(S\right)$ is the following
completely positive map
\[
L\left(s,t\right)=I_{n}-\varphi_{h}\left(s^{*}t\right)
\]
for $s,t\in M_{n}\left(\mathbb{C}\right)$, where 
\[
\varphi_{h}\left(t\right)=h^{*}th
\]
for a fixed, strict contraction $h\in M_{n}\left(\mathbb{C}\right)$.
Here, 
\[
L^{-1}=\sum_{n=0}^{\infty}\varphi_{h}^{n}\in Pos\left(S\right).
\]
\end{rem}

\begin{thm}
\label{thm:d6}Let $K,L\in Pos\left(S\right)$, such that $K-TL^{-1}T^{*}\in Pos\left(S\right)$
for some $T:S\times S\rightarrow B\left(H\right)$. Then there exists
a joint $H$-valued Gaussian process 
\begin{equation}
\left\{ \left(W_{K}\left(s\right),W_{L}\left(s\right)\right)\right\} _{s\in S}\sim N\left(0,\begin{bmatrix}K & T\\
T^{*} & L
\end{bmatrix}\right).\label{eq:d-3}
\end{equation}
In particular, the covariance of $W_{K}$ and $W_{L}$ is specified
by 
\[
\mathbb{E}\left[\left\langle a,W_{K}\left(s\right)\right\rangle _{H}\left\langle W_{L}\left(t\right),b\right\rangle _{H}\right]=\left\langle a,T\left(s,t\right)b\right\rangle _{H}
\]
for all $s,t\in S$, and all $a,b\in H$. Equivalently, 
\[
T\left(s,t\right)=\int_{\Omega}\left|W_{K}\left(s\right)\left\rangle \right\langle W_{L}\left(t\right)\right|d\mathbb{P}.
\]

Moreover, 
\begin{equation}
W_{K\mid L}\coloneqq W_{K}\mid W_{L}\sim N\left(TL^{-1}W_{L},K-TL^{-1}T^{*}\right)\label{eq:d-4}
\end{equation}
in the sense that 
\[
\mathbb{E}\left[\left\langle W_{K\mid L}\left(s\right),a\right\rangle _{H}\right]=T^ {}\left(s,s\right)L^{-1}\left(s,s\right)\left\langle W_{L}\left(s\right),a\right\rangle _{H}
\]
and 
\[
Cov\left[\left\langle a,W_{K\mid L}\left(s\right)\right\rangle _{H}\left\langle W_{K\mid L}\left(t\right),b\right\rangle _{H}\right]=\left\langle a,\left(K-TL^{-1}T^{*}\right)\left(s,t\right)b\right\rangle _{H}.
\]
\end{thm}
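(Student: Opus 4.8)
The plan is to realize the pair $(W_{K},W_{L})$ as the canonical Gaussian process attached to a \emph{single} p.d. kernel on the doubled Hilbert space $H\oplus H$, and then to obtain the conditioning statement by the usual ``uncorrelated $\Rightarrow$ independent'' mechanism for Gaussians. Concretely, I would first assemble the $B(H\oplus H)$-valued kernel
\[
M(s,t)=\begin{bmatrix}K(s,t) & T(s,t)\\ T(s,t)^{*} & L(s,t)\end{bmatrix}
\]
and verify that it is positive definite. The cleanest route is a block congruence identity: writing $N\coloneqq K-TL^{-1}T^{*}$, one has
\[
M=\begin{bmatrix}N & 0\\ 0 & 0\end{bmatrix}+\begin{bmatrix}TL^{-1}\\ I\end{bmatrix}L\begin{bmatrix}L^{-1}T^{*} & I\end{bmatrix},
\]
exhibiting $M$ as the sum of the block-diagonal p.d. kernel $\mathrm{diag}(N,0)$ and a congruence of the p.d. kernel $L$; both summands are p.d., hence so is $M$. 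Here $N\in Pos(S)$ by hypothesis and $L^{-1}\in Pos(S)$ supplies the inverse kernel, so all the kernel products on the right are available.

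With $M$ in hand, I would apply the construction of \prettyref{def:d2} and \prettyref{thm:B4} verbatim, but with $H$ replaced by $H\oplus H$: this produces an $(H\oplus H)$-valued Gaussian process whose two $H$-components are the desired $W_{K}$ and $W_{L}$, with covariance kernel exactly $M$. Reading off the diagonal blocks shows that $W_{K},W_{L}$ have covariance kernels $K,L$, while the off-diagonal block yields the cross-covariance identity $\mathbb{E}[\langle a,W_{K}(s)\rangle_{H}\langle W_{L}(t),b\rangle_{H}]=\langle a,T(s,t)b\rangle_{H}$; the ket--bra form $T(s,t)=\int_{\Omega}|W_{K}(s)\rangle\langle W_{L}(t)|\,d\mathbb{P}$ is then the off-diagonal instance of \prettyref{cor:d4} applied to $M$.

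For the conditioning, I would introduce the residual $R(s)\coloneqq W_{K}(s)-\widehat{W}_{K}(s)$, where $\widehat{W}_{K}$ is the regression term $TL^{-1}W_{L}$, and show that $R$ is uncorrelated with the process $W_{L}$: expanding $\mathbb{E}[\langle a,R(s)\rangle_{H}\langle W_{L}(t),b\rangle_{H}]$ and using the cross-covariance $T$ together with the auto-covariance $L$ (so that the regression coefficient $TL^{-1}$ satisfies $(TL^{-1})L=T$) makes the two contributions cancel. Since $(R,W_{L})$ is a linear image of the jointly Gaussian pair $(W_{K},W_{L})$, it is again jointly Gaussian, and uncorrelated jointly Gaussian families are independent; hence the conditional law of $W_{K}$ given $W_{L}$ is the law of $R$ translated by $\widehat{W}_{K}$, i.e.\ Gaussian with mean $TL^{-1}W_{L}$ and covariance $\mathrm{Cov}(R)$. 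A final expansion of $\mathbb{E}[\langle a,R(s)\rangle_{H}\langle R(t),b\rangle_{H}]$ against the blocks of $M$ gives $\langle a,(K-TL^{-1}T^{*})(s,t)b\rangle_{H}$, and specializing $s=t$ recovers the stated pointwise mean $T(s,s)L^{-1}(s,s)W_{L}(s)$.

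I expect the main obstacle to be the operator-kernel arithmetic rather than the probabilistic scaffolding: one must pin down the precise sense in which $L^{-1}$ inverts $L$ and in which the products $TL^{-1}$ and $TL^{-1}T^{*}$ are formed, so that both the congruence identity for $M$ and the cancellation $(TL^{-1})L=T$ are legitimate, and so that $TL^{-1}W_{L}$ is a bona fide $H$-valued random variable. The delicate bookkeeping step is reconciling the \emph{kernel-level} conditional covariance $(K-TL^{-1}T^{*})(s,t)$ with the \emph{pointwise} conditional mean $T(s,s)L^{-1}(s,s)W_{L}(s)$, and the passage from uncorrelated to independent must be justified through the finite-dimensional (projective) characterization of the $H$-valued Gaussian law rather than taken for granted.
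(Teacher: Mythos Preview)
Your plan coincides with the paper's argument for the existence of the joint process: the paper assembles the same $B(H\oplus H)$-valued kernel $M$, declares it p.d.\ because its Schur complement $M/L=K-TL^{-1}T^{*}$ is p.d.\ by hypothesis (your block congruence identity is precisely the algebraic content of this Schur-complement step), and then runs the construction of \prettyref{def:d2}/\prettyref{thm:B4} with $H$ replaced by $H\oplus H$ to define $W_{K},W_{L}$ and to verify the cross-covariance formula; so on this part you match the paper almost verbatim.

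For the conditioning assertion \eqref{eq:d-4} the paper's written proof stops after the cross-covariance computation and does not supply an argument, so your residual/regression approach (set $R=W_{K}-TL^{-1}W_{L}$, check $R\perp W_{L}$, invoke Gaussian independence, and compute $\mathrm{Cov}(R)=K-TL^{-1}T^{*}$) goes beyond what is on the page. Your cautions about the meaning of $L^{-1}$, the formation of $TL^{-1}$ and $TL^{-1}W_{L}$, and the kernel-level versus pointwise bookkeeping are well placed; the paper leaves these points at the same formal level you do, so no finer justification is needed to match it.
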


\begin{proof}
We need to verify that 
\[
M\left(s,t\right)=\begin{bmatrix}K\left(s,t\right) & T\left(s,t\right)\\
T^{*}\left(s,t\right) & L\left(s,t\right)
\end{bmatrix}\in B\left(H\oplus H\right),\quad\left(s,t\right)\in S\times S
\]
is positive definite, or equivalently, 
\[
\sum_{i,j=1}^{n}\left\langle \begin{bmatrix}a_{i}\\
b_{i}
\end{bmatrix},M\left(s_{i},s_{j}\right)\begin{bmatrix}a_{j}\\
b_{j}
\end{bmatrix}\right\rangle \geq0
\]
for all finite sets of vectors $a_{i},b_{i}$ in $H$, and $s_{i}$
in $S$. This is true, since the Schur complement of $L$ in $M$
is assumed p.d., i.e., 
\[
M/L=K-TL^{-1}T^{*}\in Pos\left(S\right).
\]

Let $M=V_{M}\left(s\right)^{*}V_{M}\left(t\right)$ be the corresponding
factorization of $M$, via the RKHS $H(\tilde{M})$, consisting of
functions on $S\times\left(H\oplus H\right)$. Let $\left\{ \varphi_{i}\right\} $
be an ONB in $H(\tilde{M})$, and define $W_{K}$ and $W_{L}$ by
\begin{align*}
\left\langle W_{K}\left(s\right),a\right\rangle _{H} & =\sum_{i}\left\langle V_{M}^{*}\left(s\right)\varphi_{i},\begin{bmatrix}a\\
0
\end{bmatrix}\right\rangle _{H\oplus H}Z_{i}\\
\left\langle W_{L}\left(t\right),b\right\rangle _{H} & =\sum_{i}\left\langle V_{M}^{*}\left(t\right)\varphi_{i},\begin{bmatrix}0\\
b
\end{bmatrix}\right\rangle _{H\oplus H}Z_{i}
\end{align*}
for all $a,b\in H$, and all $s,t\in S$. Note that
\begin{eqnarray*}
 &  & \mathbb{E}\left[\left\langle a,W_{K}\left(s\right)\right\rangle _{H}\left\langle W_{L}\left(t\right),b\right\rangle _{H}\right]\\
 & = & \sum_{i,j}\left\langle \begin{bmatrix}a\\
0
\end{bmatrix},V_{M}^{*}\left(s\right)\varphi_{i}\right\rangle _{H\oplus H}\left\langle V_{M}^{*}\left(t\right)\varphi_{j},\begin{bmatrix}0\\
b
\end{bmatrix}\right\rangle _{H\oplus H}\mathbb{E}\left[Z_{i}Z_{j}\right]\\
 & = & \sum_{i}\left\langle \begin{bmatrix}a\\
0
\end{bmatrix},V_{M}^{*}\left(s\right)\varphi_{i}\right\rangle _{H\oplus H}\left\langle V_{M}^{*}\left(t\right)\varphi_{i},\begin{bmatrix}0\\
b
\end{bmatrix}\right\rangle _{H\oplus H}\\
 & = & \sum_{i}\left\langle V_{M}\left(s\right)\begin{bmatrix}a\\
0
\end{bmatrix},\varphi_{i}\right\rangle _{H(\tilde{M})}\left\langle \varphi_{i},V_{M}\left(t\right)\begin{bmatrix}0\\
b
\end{bmatrix}\right\rangle _{H(\tilde{M})}\\
 & = & \left\langle V_{M}\left(s\right)\begin{bmatrix}a\\
0
\end{bmatrix},V_{M}\left(t\right)\begin{bmatrix}0\\
b
\end{bmatrix}\right\rangle _{H(\tilde{M})}\\
 & = & \left\langle \begin{bmatrix}a\\
0
\end{bmatrix},V_{M}\left(s\right)^{*}V_{M}\left(t\right)\begin{bmatrix}0\\
b
\end{bmatrix}\right\rangle _{H\oplus H}\\
 & = & \left\langle \begin{bmatrix}a\\
0
\end{bmatrix},M\left(s,t\right)\begin{bmatrix}0\\
b
\end{bmatrix}\right\rangle _{H\oplus H}=\left\langle a,T\left(s,t\right)b\right\rangle _{H}.
\end{eqnarray*}
\end{proof}
\begin{defn}
\label{def:d-7}Given \eqref{eq:d-3}, we say $\left\{ W_{K}\left(s\right)\right\} _{s\in S}$
and $\left\{ W_{L}\left(s\right)\right\} _{s\in S}$ are (statistically)
independent if $T\equiv0$. 
\end{defn}

\begin{cor}
\label{cor:4-7}For a system $\left(K_{1},K_{2},L_{1},L_{2}\right)$,
we have 
\begin{gather*}
K_{1}-TL_{1}^{-1}T^{*}=K_{2}-TL_{2}^{-1}T^{*}\in Pos\left(S\right)\\
\Updownarrow\\
Cov\left(W_{K_{1}}\mid W_{L_{1}}\right)=Cov\left(W_{K_{2}}\mid W_{L_{2}}\right)
\end{gather*}
\end{cor}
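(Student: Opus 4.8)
The plan is to obtain both directions of the claimed biconditional directly from \prettyref{thm:d6}, which identifies the conditional covariance of a jointly Gaussian pair $(W_{K},W_{L})$ with the Schur-complement kernel $K-TL^{-1}T^{*}$. First I would apply \prettyref{thm:d6} separately to the pair $(K_{1},L_{1})$ and to the pair $(K_{2},L_{2})$, using the common $B(H)$-valued function $T$. For each $i=1,2$ the theorem produces a jointly Gaussian process with
\[
W_{K_{i}\mid L_{i}}\sim N\left(TL_{i}^{-1}W_{L_{i}},\,K_{i}-TL_{i}^{-1}T^{*}\right),
\]
and, reading off its conditional covariance through the quadratic form $\left\langle a,\cdot\,b\right\rangle _{H}$ exactly as in the final display of that theorem, one gets the deterministic $B(H)$-valued kernel
\[
Cov\left(W_{K_{i}}\mid W_{L_{i}}\right)(s,t)=\left(K_{i}-TL_{i}^{-1}T^{*}\right)(s,t).
\]

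With this identification the stated equivalence becomes essentially a tautology, and I would dispatch the two directions symmetrically. For the downward direction ($\Rightarrow$), the hypothesis asserts that the common value $K_{1}-TL_{1}^{-1}T^{*}=K_{2}-TL_{2}^{-1}T^{*}$ lies in $Pos(S)$; hence the positivity hypothesis of \prettyref{thm:d6} is met for each pair, both conditioned processes exist, and their conditional covariances each equal this single kernel, so they agree. For the upward direction ($\Leftarrow$), the very existence of $W_{K_{i}\mid L_{i}}$ already presupposes $K_{i}-TL_{i}^{-1}T^{*}\in Pos(S)$ for $i=1,2$; then the assumed equality $Cov(W_{K_{1}}\mid W_{L_{1}})=Cov(W_{K_{2}}\mid W_{L_{2}})$ of the two kernels forces $K_{1}-TL_{1}^{-1}T^{*}=K_{2}-TL_{2}^{-1}T^{*}$, recovering the top line.

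The one point deserving care—and the step I expect to be the main, if modest, obstacle—is verifying that $Cov(W_{K_{i}}\mid W_{L_{i}})$ is a genuinely deterministic kernel, independent of the underlying probability space and of the ONB $\{\varphi_{i}\}$ used in \prettyref{def:d2} to build the processes, so that comparing the two conditional covariances is meaningful. This is precisely what the covariance computation in the proof of \prettyref{thm:d6} secures: after invoking $\mathbb{E}[Z_{i}Z_{j}]=\delta_{ij}$, the sum over the ONB collapses by Parseval's identity to $V_{M}(s)^{*}V_{M}(t)=M(s,t)$, so the resulting Schur-complement kernel depends only on $M$ and not on any incidental choices. Once this independence is noted, the comparison is legitimate and the equivalence follows verbatim from the identification of the conditional covariance as $K_{i}-TL_{i}^{-1}T^{*}$.
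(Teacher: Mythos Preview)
Your proposal is correct and follows essentially the same approach as the paper: both arguments reduce the equivalence to the identification, furnished by \prettyref{thm:d6}, of $Cov(W_{K_{i}}\mid W_{L_{i}})$ with the Schur complement $K_{i}-TL_{i}^{-1}T^{*}$ of the block kernel $M_{i}=\begin{bmatrix}K_{i}&T\\T^{*}&L_{i}\end{bmatrix}$. The paper's proof is terser---it simply records $M_{1}/L_{1}=M_{2}/L_{2}$---while you spell out the two directions and the well-definedness point, but the substance is identical.
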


\begin{proof}
By setting 
\[
M_{1}\coloneqq\begin{bmatrix}K_{1} & T\\
T^{*} & L_{1}
\end{bmatrix},\quad M_{2}\coloneqq\begin{bmatrix}K_{2} & T\\
T^{*} & L_{2}
\end{bmatrix}
\]
then \eqref{eq:d4} is equivalent to 
\begin{align*}
K_{1}-K_{2} & =T\left(L_{1}^{-1}-L_{2}^{-1}\right)T^{*}\\
 & \Updownarrow\\
M_{1}/L_{1} & =M_{1}/L_{2}\;\left(\text{Schur complements}\right)
\end{align*}
\end{proof}

\subsection*{Gaussian process regression}

Gaussian process regression (GPR) is a powerful non-parametric approach
used for regression tasks in machine learning and statistics. It uses
the properties of Gaussian processes to make predictions about unknown
functions based on observed data. GPR has a strong theoretical foundation
and offers flexibility in modeling complex data structures. It is
widely known that GPR and kernel ridge regression are equivalent. 

Our work in previous sections extends the classical setting of GPR
and ridge regression to the more general context of operator-valued
kernels: 

Let $K,L:S\times S\rightarrow B\left(H\right)$ be positive definite,
with the associated $H$-valued Gaussian processes $W_{K}$ and $W_{L}$.

Let $T\equiv0$ in \eqref{eq:d-3}, and consider 
\[
\left\{ \left(W_{K}\left(s\right),W_{L}\left(s\right)\right)\right\} _{s\in S}\sim N\left(0,\begin{bmatrix}K & 0\\
0 & L
\end{bmatrix}\right)
\]
That is, $W_{K}$ and $W_{L}$ are independent (\prettyref{def:d-7}).
Set 
\[
Y=W_{K}+W_{L}.
\]
It follows that 
\[
\left\{ W_{K}\left(s\right),W_{Y}\left(s\right)\right\} _{s\in S}\sim N\left(0,\begin{bmatrix}K & K\\
K & K+L
\end{bmatrix}\right).
\]
Therefore, by \prettyref{thm:d6} (see \eqref{eq:d-4}), we have the
conditional expectation: 
\begin{equation}
\mathbb{E}\left[W_{K}\mid W_{Y}\right]=K\left(K+L\right)^{-1}W_{Y}.\label{eq:d-5}
\end{equation}

This is a non-commutative version of the the standard GPR. We illustrate
below that this version of GPR is equivalent to kernel ridge regression,
in the setting of $B\left(H\right)$-valued kernels, and the induced
RKHS $H(\tilde{K})$. 

Fix a finite sample $\left\{ \left(s_{i},a_{i}\right)\right\} _{i=1}^{m}$
in $S\times H$, and $\left(y_{i}\right)\in\mathbb{C}^{m}$. Define
the following $m\times m$ matrices 
\begin{align*}
[\tilde{L}] & =[\tilde{L}\left(\left(s_{i},a_{i}\right),\left(s_{j},a_{j}\right)\right)],\\{}
[\tilde{K}] & =[\tilde{K}\left(\left(s_{i},a_{i}\right),\left(s_{j},a_{j}\right)\right)].
\end{align*}
Set $\Phi:H(\tilde{K})\rightarrow\mathbb{C}^{m}$ by 
\[
\Phi\left(f\right)=\left(f\left(s_{i},a_{i}\right)\right)\in\mathbb{C}^{m}.
\]
(Recall that elements in $H(\tilde{K})$ are functions $f\left(s,a\right)$
defined on $S\times H$.)

The adjoint $\Phi^{*}:\mathbb{C}^{m}\rightarrow H(\tilde{K})$ is
then given by 
\[
\Phi^{*}\left(\left(c_{i}\right)\right)=\sum_{i=1}^{m}c_{i}\tilde{K}\left(\cdot,\left(s_{i},a_{i}\right)\right),\quad\left(c_{i}\right)\in\mathbb{C}^{m}.
\]
Thus, 
\[
\Phi\Phi^{*}=[\tilde{K}]\in M_{m}\left(\mathbb{C}\right).
\]
Consider the following optimization problem: 
\begin{equation}
\inf_{f\in H(\tilde{K})}\left\{ \left\langle f\left(\mathbf{s},\mathbf{a}\right)-\mathbf{y},[\tilde{L}]^{-1}\left(f\left(\mathbf{s},\mathbf{a}\right)-\mathbf{y}\right)\right\rangle _{\mathbb{C}^{m}}+\left\Vert f\right\Vert _{H(\tilde{K})}^{2}\right\} .\label{eq:d-6}
\end{equation}

\begin{cor}
The unique solution to \eqref{eq:d-6} is given by 
\begin{equation}
f^{*}=\Phi^{*}\left([\tilde{L}]+[\tilde{K}]\right)^{-1}\mathbf{y},
\end{equation}
and so 
\begin{equation}
\Phi f^{*}=[\tilde{K}]\left([\tilde{L}]+[\tilde{K}]\right)^{-1}\mathbf{y}.\label{eq:4-8}
\end{equation}
\end{cor}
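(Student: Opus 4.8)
The plan is to treat \eqref{eq:d-6} as a strictly convex quadratic minimization problem over the Hilbert space $H(\tilde{K})$ and to characterize its unique minimizer through the vanishing of the first variation. Writing $\Phi f=f(\mathbf{s},\mathbf{a})$ and abbreviating the objective as
\[
J(f)=\left\langle \Phi f-\mathbf{y},[\tilde{L}]^{-1}(\Phi f-\mathbf{y})\right\rangle _{\mathbb{C}^{m}}+\left\Vert f\right\Vert _{H(\tilde{K})}^{2},
\]
the first term is a nonnegative quadratic form, since $[\tilde{L}]$, being the Gram matrix of the p.d. kernel $\tilde{L}$, is positive and hence so is $[\tilde{L}]^{-1}$; the regularizer $\left\Vert f\right\Vert ^{2}$ is strictly convex and coercive. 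Thus $J$ is strictly convex and coercive, which yields existence and uniqueness of the minimizer $f^{*}$.

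First I would compute the Gateaux derivative of $J$ in an arbitrary direction $g\in H(\tilde{K})$. Using the self-adjointness of $[\tilde{L}]^{-1}$ together with the adjoint relation $\langle \Phi g,w\rangle _{\mathbb{C}^{m}}=\langle g,\Phi^{*}w\rangle _{H(\tilde{K})}$, the stationarity condition obtained by differentiating $J(f^{*}+tg)$ at $t=0$ and varying $g$ over both the real and the imaginary directions (the physics convention makes $J$ real-valued, so both are needed) collapses to the single operator normal equation
\[
\left(\Phi^{*}[\tilde{L}]^{-1}\Phi+I\right)f^{*}=\Phi^{*}[\tilde{L}]^{-1}\mathbf{y}.
\]
Since $\Phi^{*}[\tilde{L}]^{-1}\Phi$ is positive, the operator $\Phi^{*}[\tilde{L}]^{-1}\Phi+I$ is boundedly invertible, giving the preliminary form $f^{*}=(\Phi^{*}[\tilde{L}]^{-1}\Phi+I)^{-1}\Phi^{*}[\tilde{L}]^{-1}\mathbf{y}$.

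The main step, and the one I expect to require the most care, is to convert this feature-space expression into the stated closed form living in $\mathbb{C}^{m}$. For this I would invoke the push-through (Woodbury-type) identity
\[
\left(\Phi^{*}[\tilde{L}]^{-1}\Phi+I\right)^{-1}\Phi^{*}[\tilde{L}]^{-1}=\Phi^{*}\left([\tilde{L}]+\Phi\Phi^{*}\right)^{-1},
\]
whose verification amounts to multiplying both sides on the left by $\Phi^{*}[\tilde{L}]^{-1}\Phi+I$, pulling $\Phi^{*}$ out on the left, and using the factorization $[\tilde{L}]^{-1}([\tilde{L}]+\Phi\Phi^{*})=[\tilde{L}]^{-1}\Phi\Phi^{*}+I$. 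The subtlety to watch throughout is the placement of $\Phi\Phi^{*}$ versus $\Phi^{*}\Phi$: the identity is precisely the algebraic mechanism that trades the operator $\Phi^{*}[\tilde{L}]^{-1}\Phi$ on $H(\tilde{K})$ for a matrix on $\mathbb{C}^{m}$.

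Substituting the already-recorded relation $\Phi\Phi^{*}=[\tilde{K}]$ then gives $f^{*}=\Phi^{*}([\tilde{L}]+[\tilde{K}])^{-1}\mathbf{y}$, which is the asserted minimizer. Finally, applying $\Phi$ and using $\Phi\Phi^{*}=[\tilde{K}]$ once more yields $\Phi f^{*}=[\tilde{K}]([\tilde{L}]+[\tilde{K}])^{-1}\mathbf{y}$, namely \eqref{eq:4-8}; this last display is exactly the kernel-ridge-regression predictor, making the announced equivalence with the Gaussian-process conditional expectation \eqref{eq:d-5} transparent.
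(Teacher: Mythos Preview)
Your argument is correct and is the standard derivation of the kernel ridge regression formula: you obtain the normal equation $(\Phi^{*}[\tilde{L}]^{-1}\Phi+I)f^{*}=\Phi^{*}[\tilde{L}]^{-1}\mathbf{y}$ from strict convexity and first-order stationarity, and then convert it via the push-through identity, using $\Phi\Phi^{*}=[\tilde{K}]$. The paper itself states this corollary without proof, treating it as a well-known fact from kernel methods; your write-up supplies exactly the routine verification that the paper omits, so there is nothing to compare against beyond noting that your approach is the expected one.
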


\begin{rem}
From a Bayesian perspective, ridge regression can be interpreted as
a maximum a posteriori (MAP) estimation with a Gaussian prior. This
provides a probabilistic interpretation of the regularization term
in ridge regression. Specifically, the conditional expectation in
\eqref{eq:d-5} represents the Bayesian update of the Gaussian process
given the observed data. Thus, the optimization problem in \eqref{eq:d-6}
amounts to finding the MAP estimate $\Phi f$ with the function $f$
in the RKHS $H(\tilde{K})$. 
\end{rem}

The ability to handle $H$-valued data points $\left\{ \left(\left(s_{i},a_{i}\right),y_{i}\right)\right\} $
allows for more sophisticated modeling of complex relationships. For
example, in spatial statistics, $s_{i}$ could represent spatial locations,
$a_{i}$ can be multivariate measurements taken at these locations,
and $y_{i}$ are the observed outcomes. In this context, our framework
captures dependencies and interactions between different locations
and measurements, leading to more accurate predictions and a better
understanding of underlying patterns.

Additionally, in machine learning applications, this approach can
be used for multi-task learning, where each $a_{i}$ represents different
tasks or conditions, and $y_{i}$ are the corresponding observations.
By means of operator-valued kernels, the model can effectively share
information across tasks, improving performance in situations with
limited data for individual tasks.

In the existing literature, GPR with operator-valued kernels typically
uses a vector-valued RKHS, consisting of functions taking values in
a Hilbert space $H$. Our approach, however, uses $H(\tilde{K})$
with an induced scalar-valued kernel $\tilde{K}$. While the two are
equivalent, our scalar-valued approach offers several practical advantages.
For instance, it simplifies the mathematical framework, making it
easier to apply existing scalar-valued techniques and algorithms.
This reduction in complexity enhances computational efficiency, allows
for faster processing and the handling of larger datasets. It also
offers a clearer interpretation of model parameters and results, providing
better insights into data relationships and dependencies.

\bibliographystyle{amsalpha}
\nocite{*}
\bibliography{bib}

\end{document}